\theoremstyle{plain}
\newtheorem{theorem}{Theorem}[section]
\newtheorem{lemma}[theorem]{Lemma}
\theoremstyle{definition}
\newtheorem{question}[theorem]{Question}
\newtheorem{definition}[theorem]{Definition}
\newtheorem{remark}[theorem]{Remark}
\numberwithin{equation}{section}
\newcommand{\m}{\mathfrak m}
\begin{document}
\title{The Brian\c{c}on-Skoda theorem  and coefficient ideals for non $\m$-primary ideals} 
\author{Ian M. Aberbach and Aline Hosry}
\address{Department of Mathematics \\
 	University of Missouri\\
	Columbia, MO 65211, USA}
\email{aberbachi@missouri.edu}
\address{Department of Mathematics \\
 	University of Missouri\\
	Columbia, MO 65211, USA}
\email{aline.hosry@mizzou.edu}

\date{\today}
\maketitle
\begin{abstract}
We generalize a Brian\c{c}on-Skoda type theorem  first studied by Aberbach and Huneke. With some conditions on a regular local ring $(R,\m)$ containing a field, and an ideal $I$ of $R$ with analytic spread $\ell$ and a minimal reduction $J$, we prove that for all $w \geq -1$, $ \overline{I^{\ell+w}} \subseteq J^{w+1} \mathfrak{a} (I,J),$ where $\mathfrak{a}(I,J)$ is the coefficient ideal of $I$ relative to $J$, i.e. the largest ideal $\mathfrak{b}$ such that $I\mathfrak{b}=J\mathfrak{b}$.  Previously, this result was known only for $\m$-primary ideals.\\
\end{abstract}

\section{Introduction}

Throughout this paper all rings are assumed to be commutative, Noetherian and with identity. 

The classical Brian\c con-Skoda theorem, proved first by Brian\c con and Skoda in the complex analytic case \cite{BS}, and by Lipman and Sathaye for regular rings in general \cite{LS}, states that if $(R,\m)$ is a regular local ring, then given an ideal $I$ of analytic spread $\ell$, and a reduction $J$ of $I$,  we have $\overline{I^{\ell+w}} \subseteq J^{w+1}$ for $w\ge 0$.  Further refinements of this theorem have abounded.  One such refinement is  (see Section~\ref{sec:defs} for the definition of the coefficient ideal ${\mathfrak{a}} (I,J)$):

\begin{theorem} \label{coeffth1} \textup{(\cite{AH3}, Theorem 2.7)} Let $(R,\m)$ be a regular local ring of dimension $d$ containing a field and having an infinite residue field. Let $I$ be an $\m$-primary ideal and let $J$ be a minimal reduction of $I$. Then for all $w \geq -1$, 
$$ \overline{I^{d+w}} \subseteq J^{w+1} {\mathfrak{a}} (I,J).$$
\end{theorem}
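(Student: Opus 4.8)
The plan is to derive Theorem~\ref{coeffth1} from an \emph{improved} form of the Brian\c con--Skoda inclusion in which the power $J^{w+1}$ carries an extra factor, together with the observation that this extra factor already sits inside the coefficient ideal. Since $I$ is $\m$-primary we have $\ell(I)=d$, so the infinite residue field hypothesis forces the minimal reduction $J$ to be generated by $d$ elements; these form a system of parameters, hence a regular sequence (as $R$ is regular, hence Cohen--Macaulay), and $\overline{I^{n}}=\overline{J^{n}}$ for every $n$ because $J$ is a reduction of $I$. Thus the essential task is to prove
$$ \overline{I^{m}} \subseteq J^{m-d+1}\,\overline{I^{d-1}} \qquad \text{for all } m \geq d-1 $$
(the case $m=d-1$ being vacuous), equivalently, after substituting $\overline{I^{n}}=\overline{J^{n}}$, the containment $\overline{J^{m}}\subseteq J^{m-d+1}\,\overline{J^{d-1}}$ for the complete intersection ideal $J$.

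Granting this, Theorem~\ref{coeffth1} follows at once. Taking $m=d$ gives $\overline{I^{d}}\subseteq J\,\overline{I^{d-1}}$; on the other hand $I\,\overline{I^{d-1}}\subseteq\overline{I\cdot I^{d-1}}=\overline{I^{d}}$ always, and $J\,\overline{I^{d-1}}\subseteq I\,\overline{I^{d-1}}$ since $J\subseteq I$. These three containments close into a cycle, so $I\,\overline{I^{d-1}}=J\,\overline{I^{d-1}}$, which means that $\mathfrak b=\overline{I^{d-1}}$ is one of the ideals appearing in the definition of $\mathfrak a(I,J)$; by maximality, $\overline{I^{d-1}}\subseteq\mathfrak a(I,J)$. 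Finally, for $w\geq-1$ apply the displayed inclusion with $m=d+w\geq d-1$: $\overline{I^{d+w}}\subseteq J^{w+1}\,\overline{I^{d-1}}\subseteq J^{w+1}\,\mathfrak a(I,J)$, which is the assertion.

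The heart of the matter, and the step I expect to be the real obstacle, is the improved inclusion itself: the classical Brian\c con--Skoda theorem only yields $\overline{I^{m}}\subseteq J^{m-d+1}$, and producing the extra factor $\overline{I^{d-1}}$ cannot be done by citing \cite{LS} as a black box. I would attack it by reduction to positive characteristic --- which is exactly what the hypothesis that $R$ contains a field is for: spread the data $(R,\m,I,J)$ over a finitely generated $\mathbb Z$-subalgebra and reduce modulo a large prime, so that regularity, the property that $J$ is a reduction of $I$ generated by a regular sequence, and all the integral-closure memberships in sight survive the reduction. In characteristic $p$ one proves the inclusion into the tight closure $\bigl(J^{m-d+1}\overline{I^{d-1}}\bigr)^{\ast}$ by the tight-closure Brian\c con--Skoda machinery of Hochster--Huneke and Aberbach--Huneke --- a colon-capturing argument in the Rees algebra of $J$, using that $\overline{I^{d-1}}=\overline{J^{d-1}}$ is integral over $J^{d-1}$ --- and then removes the star because tight closure is trivial in the regular ring $R$.

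Alternatively, one can argue characteristic-freely through the Lipman--Sathaye Jacobian theorem by passing to a module-finite extension of $R$ on which $J$ becomes principal; there the individual steps are routine, but keeping track of the Jacobian ideal against the factor $\overline{I^{d-1}}$ is the delicate bookkeeping. In either approach, the only new ingredient beyond the classical theorem is the control of that extra factor, and everything else in the proof of Theorem~\ref{coeffth1} is the short formal reduction carried out in the second paragraph above.
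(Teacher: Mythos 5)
Your formal reduction in the second paragraph is fine as far as it goes: \emph{if} one knew the inclusion $\overline{I^{d+w}} \subseteq J^{w+1}\,\overline{I^{d-1}}$ for all $w\ge 0$, then the chain $I\,\overline{I^{d-1}} \subseteq \overline{I^{d}} \subseteq J\,\overline{I^{d-1}} \subseteq I\,\overline{I^{d-1}}$ would force $\overline{I^{d-1}} \subseteq \mathfrak a(I,J)$ and the theorem would follow. The gap is the key lemma itself, which you have not proved and which is genuinely stronger than the theorem: combining it with the trivial containments $J^{w}\,\overline{I^{d}} \subseteq \overline{I^{d+w}}$ one sees it is equivalent to asserting $\overline{I^{n+1}} = J\,\overline{I^{n}}$ for every $n \ge d-1$, i.e.\ that the normal reduction number of \emph{every} $\m$-primary ideal with respect to \emph{every} minimal reduction is at most $d-1$ (in particular every normal $\m$-primary ideal in a regular local ring would have reduction number at most $d-1$). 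That is true in dimension $2$ (via Zariski's product theorem and the Lipman--Teissier/Huneke result), but in dimension $\ge 3$ it is not a known consequence of the Brian\c con--Skoda circle of ideas and is not what \cite{AH3} proves; the actual theorem only produces the coefficient ideal $\mathfrak a(I,J)$, and nothing in it asserts $\overline{I^{d-1}} \subseteq \mathfrak a(I,J)$.

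The tight-closure sketch you give for the lemma breaks down at exactly the point the real proof has to work hardest. In characteristic $p$, from $cz^q \in J^{(d+w)q}$ the pigeonhole principle gives $cz^q \in (J^{w+1})^{[q]}\,J^{(d-1)q}$; to conclude $z \in \bigl(J^{w+1}\overline{I^{d-1}}\bigr)^{\ast}$ you would need to absorb the leftover factor $J^{(d-1)q}$ into $(\overline{I^{d-1}})^{[q]}$ up to a single fixed multiplier, and this fails: $J^{(d-1)q}$ is much larger than $(J^{d-1})^{[q]}$ (consider balanced monomials such as $(x_1\cdots x_d)^{\lceil (d-1)q/d\rceil}$), and the discrepancy grows with $q$, so no fixed $c'$ repairs it. The fact you invoke, that $\overline{I^{d-1}}$ is integral over $J^{d-1}$, gives containments in the wrong direction (it lets you replace $\overline{I^{d-1}}$ by powers of $J^{d-1}$, not insert $\overline{I^{d-1}}$ on the right-hand side). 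This obstruction is precisely why the proof of the stated theorem in \cite{AH3} (which the present paper only cites) does not aim at $\overline{I^{d-1}}$ at all: it runs an iteration producing a descending sequence of candidate coefficient ideals $\mathfrak b_k$ with $\overline{I^{d+w}} \subseteq J^{w+1}\mathfrak b_k$, uses that each $\mathfrak b_k$ contains a fixed power of $I$ so that $\m$-primariness makes the sequence stabilize, and shows the stable value satisfies $I\mathfrak b = J\mathfrak b$, hence lies in $\mathfrak a(I,J)$. Your Lipman--Sathaye alternative has the same problem: the Jacobian method naturally yields adjoint/multiplier-type factors, which contain the integral closure rather than being contained in it, so it does not produce the factor $\overline{I^{d-1}}$ either. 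As it stands, your argument replaces the theorem by an unproved (and, in dimension at least $3$, dubious) stronger statement.
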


Note that this theorem applies only to $\m$-primary ideals $I$.  The reason is that the proof relies on an iteration giving a descending sequence of ideals, all of which contain a fixed power of $I$.  Thus, in the $\m$-primary case, this descending sequence stabilizes, and the stable value is shown to be the desired value.  Therefore, the same proof will not work in the non-$\m$-primary case.  The main result of this paper (see Theorem~\ref{mainthm}) extends Theorem \ref{coeffth1} to regular rings where a certain quotient (depending on $I$) is complete---in particular, we show that the theorem is true for all ideals when $R$ itself is complete.


There have been a number of results of this type. Some of them are in \cite{AH1},\cite{AH2},\cite{AH3},\cite{AHT},\cite{HH},\cite{LT},\cite{Sw}. In particular, with the development of the theory of \textit{tight closure} by Hochster and Huneke, these authors proved a generalized Brian\c{c}on-Skoda theorem from which the original Brian\c{c}on-Skoda theorem could follow. We discuss this for rings containing a field in the next section, after the definition of tight closure.\\

\section{Integral closure, tight closure and theorems of Brian\c con-Skoda type}\label{sec:defs}

Recall that an element $x$ of $R$ is \textit{integral} over an ideal $I$ of $R$ if there exists a positive integer $k$ such that $x^k+a_1x^{k-1}+\dots+a_k=0$ where $a_i \in I^i$ for $1\leq i \leq k$. The set of all elements of $R$ that are integral over $I$ is an ideal of $R$ called the \textit{integral closure} of $I$. 

Another definition is the one of a \textit{reduction} of an ideal that was first introduced by Northcott and Rees \cite{NR}. An ideal $J \subseteq I$ is a \textit{reduction} of $I$ if there exists an integer $r$ such that $J I^r= I^{r+1}$. The least such integer is the \textit{reduction number} of $I$ with respect to $J$. A reduction $J$ of $I$ is called a \textit{minimal reduction} if $J$ is minimal with respect to inclusion among reductions. When the ring $(R,m)$ is local with infinite residue field, every minimal reduction $J$ of $I$ has the same number of minimal generators. This number is called the \textit{analytic spread} of $I$, denoted by $\ell(I)$, and we always have that ht($I$) $\leq \ell(I) \leq $ dim $R$. 
 If an ideal $J\subseteq I$ is a reduction, then $\overline{J}=\overline{I}$.

Let $R$ be a Noetherian ring of prime characteristic $p>0$ and let $q$ be a varying power of $p$. Let $R^o$ be the complement of the union of the minimal primes of $R$ and let $I$ be an ideal of $R$. Define $I^{[q]}=(i^q: i \in I)$, the ideal generated by the $q^{th}$ powers of all the elements of $I$. The \textit{tight closure} of $I$ is the ideal $I^*=\{x \in R;\, \mbox{for some}\; c \in R^o, cx^q \in I^{[q]},\; \mbox{for}\, q \gg 0\}$. We always have that $I \subseteq I^*\subseteq \overline{I}$. If $I^*=I$ then the ideal $I$ is said to be \textit{tightly closed}. A ring in which every ideal is tightly closed is called \textit{weakly F-regular}. We say that elements $x_1,\dots, x_n$ of $R$ are \textit{parameters} if the height of the ideal generated by them is at least $n$ (we allow this ideal to be the whole ring, in which case the height is said to be $\infty$). The ring $R$ is said to be \textit{F-rational} if the ideals generated by parameters are tightly closed.

The theory of tight closure gives another proof of the Brian\c{c}on-Skoda theorem in characteristic $p$.

\begin{theorem} \textup{(\cite{HH}, Theorem 5.4)} \label {HH} Let $R$ be a Noetherian ring of characteristic $p$, and let $I$ be an ideal of positive height generated by $n$ elements. Then for every $w \in \mathbb{N}$, $\overline{I^{n+w}} \subseteq (I^{w+1})^*$. In particular, $\overline{I^n} \subseteq I^*.$

If $R$ is weakly F-regular (in particular, if $R$ is regular), of characteristic $p$, then $\overline{I^{n+w}} \subseteq I^{w+1}$ and $\overline{I^n} \subseteq I.$
\end{theorem}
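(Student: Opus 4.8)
The plan is to reduce the statement to a purely combinatorial ``pigeonhole'' containment together with the definition of tight closure. First I would fix $x \in \overline{I^{n+w}}$ and write $I = (y_1, \dots, y_n)$. Since $\hgt I > 0$, the ideal $I^{n+w}$ is not contained in any minimal prime of $R$, so a standard characterization of integral closure provides an element $c \in R^\circ$ with $c x^m \in (I^{n+w})^m = I^{(n+w)m}$ for every $m \geq 1$ (for our purposes it would even suffice to have this for all $m \gg 0$). In particular, for every $q = p^e$ we get $c x^q \in I^{(n+w)q}$.

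The heart of the argument is the containment $I^{(n+w)q} \subseteq (I^{w+1})^{[q]}$ for every $q = p^e$. To prove it, observe that $I^{(n+w)q}$ is generated by the monomials $y_1^{a_1}\cdots y_n^{a_n}$ with $\sum_{i=1}^n a_i = (n+w)q$. Writing $a_i = q\gamma_i + \rho_i$ with $0 \le \rho_i \le q-1$, we have $\sum_i \rho_i \le n(q-1)$, hence $\sum_i \gamma_i = \big((n+w)q - \sum_i \rho_i\big)/q \ge w + n/q > w$; since $\sum_i \gamma_i$ is an integer, this forces $\sum_i \gamma_i \ge w+1$. Therefore $y_1^{a_1}\cdots y_n^{a_n} = \big(\prod_i (y_i^q)^{\gamma_i}\big)\big(\prod_i y_i^{\rho_i}\big) \in (I^{[q]})^{\sum_i \gamma_i} \subseteq (I^{[q]})^{w+1} = (I^{w+1})^{[q]}$, the last equality being the usual compatibility of Frobenius powers with ordinary powers of ideals. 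Combining with the first paragraph, $c x^q \in (I^{w+1})^{[q]}$ for all $q = p^e$, so $x \in (I^{w+1})^*$. This proves $\overline{I^{n+w}} \subseteq (I^{w+1})^*$, and the case $w=0$ gives $\overline{I^n} \subseteq I^*$.

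For the final assertion, if $R$ is weakly F-regular then every ideal is tightly closed, so $(I^{w+1})^* = I^{w+1}$ and the preceding part yields $\overline{I^{n+w}} \subseteq I^{w+1}$ (and $\overline{I^n}\subseteq I$ for $w=0$); a regular ring of characteristic $p$ is weakly F-regular because the Frobenius endomorphism is flat (Kunz), which makes ideals tightly closed. I expect the only genuinely delicate point to be the bookkeeping in the pigeonhole step: although the estimate $\sum_i\gamma_i \ge w + n/q$ need not by itself reach $w+1$, integrality of $\sum_i\gamma_i$ together with $\hgt I>0$ (which guarantees $n\ge 1$, so $n/q>0$) does force $\sum_i\gamma_i \ge w+1$; relatedly, one must make sure the integral-closure characterization is invoked in a form valid for an arbitrary Noetherian ring of positive characteristic, where having $cx^q\in I^{(n+w)q}$ merely for all $q\gg 0$ is already enough.
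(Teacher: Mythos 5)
Your proof is correct and follows exactly the standard tight-closure argument for this theorem: the paper itself only cites \cite{HH} for it, and the sketch it gives right after (producing $c\in R^o$ with $cz^q\in (J^{\ell})^q\subseteq J^{[q]}J^{(\ell-1)q}$) is precisely your pigeonhole step $I^{(n+w)q}\subseteq (I^{[q]})^{w+1}=(I^{w+1})^{[q]}$. The bookkeeping ($\sum_i\gamma_i\ge w+n/q>w$ forcing $\sum_i\gamma_i\ge w+1$) and the reduction of the weakly F-regular/regular case via Kunz are handled correctly.
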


It should be noted that the characteristic zero case of the original Brian\c{c}on-Skoda theorem can be reduced to the characteristic $p$ case, but tight closure does not seem to offer such a generalization for rings of mixed characteristic.

Another theorem, established by Aberbach and Huneke, allows us to replace the assumption weakly F-regular by F-rational, in the second part of Theorem \ref{HH}. It states the following:

\begin{theorem} \textup{(\cite{AH1}, Theorem 3.6)} Let $(R,\m)$ be an F-rational local ring of characteristic $p$, and let $I \subseteq R$ be an ideal generated by $\ell$ elements. Then $\overline{I^{\ell+w}} \subseteq I^{w+1}$ for all $w \geq 0$.
\end{theorem}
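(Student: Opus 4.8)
The plan is to deduce the statement from the tight closure form of the Brian\c{c}on-Skoda theorem together with the defining property of F-rationality. By Theorem~\ref{HH} we already have $\overline{I^{\ell+w}} \subseteq (I^{w+1})^*$, so the task reduces to showing $(I^{w+1})^* \cap \overline{I^{\ell+w}} \subseteq I^{w+1}$; because F-rationality guarantees only that ideals generated by parameters are tightly closed, the crux will be to recognize the relevant ideal, or a suitable intersection containing it, as parameter-generated. I would first carry out routine reductions: by standard arguments we may assume $R$ is complete with infinite residue field, so that $R$ is an excellent normal Cohen-Macaulay domain possessing a test element and module-finite over a regular local subring, and all of these operations are compatible with the formation of $\overline{I^n}$ and of $I^{w+1}$. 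If $\hgt I = 0$ then $I = 0$ and the statement is vacuous, so assume $\hgt I > 0$.

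Next I would reduce to the case where $I$ is a minimal reduction. Let $J \subseteq I$ be a minimal reduction; it is generated by $\ell(I) \le \ell$ elements, $\overline{I^n} = \overline{J^n}$ for all $n$, and $J^{w+1} \subseteq I^{w+1}$. Writing $\ell + w = \ell(I) + \big(w + \ell - \ell(I)\big)$ with $w + \ell - \ell(I) \ge w \ge 0$, it suffices to prove $\overline{J^{\,\ell(I)+w'}} \subseteq J^{w'+1}$ for all $w' \ge 0$. Since the residue field is infinite, we may choose the generators $x_1, \dots, x_{\ell(I)}$ of $J$ to be generic combinations of generators of $I$; then $x_1, \dots, x_{\hgt I}$ form a regular sequence, and $x_1, \dots, x_{\ell(I)}$ form a $d$-sequence, so the associated graded ring $\mathrm{gr}_J(R)$ is Cohen-Macaulay. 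In particular, if $I$ is $\m$-primary then $J$ is generated by a full system of parameters, i.e.\ by a maximal regular sequence.

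The core step is to prove that $J^{w+1}$ is tightly closed; granting that, $\overline{I^{\ell+w}} = \overline{J^{\ell+w}} \subseteq (J^{w+1})^* = J^{w+1} \subseteq I^{w+1}$, as desired. When $J = (x_1, \dots, x_m)$ is generated by a regular sequence --- which, after the previous step, covers the $\m$-primary case and, more generally, the equimultiple case --- one has the identity $J^{w+1} = \bigcap (x_1^{a_1}, \dots, x_m^{a_m})$, the intersection being over all $(a_1,\dots,a_m)$ with each $a_i \ge 1$ and $a_1 + \cdots + a_m = m + w$; powers of a regular sequence again form a regular sequence, so each ideal on the right is generated by $m$ parameters and hence is tightly closed by F-rationality, and an intersection of tightly closed ideals is tightly closed. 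In the remaining, non-equimultiple situation one must instead use the $d$-sequence structure and the Cohen-Macaulayness of $\mathrm{gr}_J(R)$ and of the Rees algebra of $J$ to place the tight closure of $J^{w+1}$ inside a parameter ideal, in combination with the Lipman-Sathaye Jacobian theorem \cite{LS} (which supplies a test element $c \in R^o$ with $c\,\overline{I^{\ell+w}} \subseteq I^{w+1}$) and with the fact that $x \in \overline{I^{\ell+w}}$ forces $x^q \in \overline{(I^{[q]})^{\ell+w}}$ for every power $q$ of $p$.

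I expect the main obstacle to be exactly the non-$\m$-primary case. One cannot simply localize at the minimal primes of $I$ and induct on dimension, since $\m$ may be an associated prime of $I^{w+1}$, so the inclusion has to be obtained directly inside $(R,\m)$; the delicate point is to show that the appropriate power of the generic minimal reduction is tightly closed even when the ambient ideal is not parameter-generated, and it is here that the interplay of Cohen-Macaulayness, $d$-sequences, and the Lipman-Sathaye estimate carries the argument. This is also why the coefficient-ideal refinement in Theorem~\ref{coeffth1} was, before the present paper, available only for $\m$-primary ideals.
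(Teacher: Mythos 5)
There is a genuine gap, and it sits exactly where the real content of this theorem lies. Note first that the paper you were given does not prove this statement at all: it is quoted from \cite{AH1} as background, so there is no internal proof to match; judged on its own, your argument is complete only in the special case where a minimal reduction $J$ of $I$ is generated by parameters, i.e.\ when $\ell(I)=\hgt I$ (in particular the $\m$-primary case). There your scheme is sound: F-rational local rings are normal and Cohen--Macaulay, generic generators of $J$ form a regular sequence, the decomposition $J^{w+1}=\bigcap\,(x_1^{a_1},\dots,x_m^{a_m})$ over $a_i\ge 1$, $\sum a_i=m+w$ holds for a regular sequence, each $(x_1^{a_1},\dots,x_m^{a_m})$ is a parameter ideal, and an intersection of tightly closed ideals is tightly closed, so Theorem~\ref{HH} finishes it. But for a general ideal one has $\hgt I\le \ell(I)$ with strict inequality possible, and then $J$ is \emph{not} generated by parameters, F-rationality says nothing directly about $(J^{w+1})^*$, and your proposed substitutes do not hold: generic minimal generators of a minimal reduction need not form a $d$-sequence, $\operatorname{gr}_J(R)$ and the Rees algebra of $J$ need not be Cohen--Macaulay, and the Lipman--Sathaye theorem \cite{LS} is a statement about regular rings and does not supply an element $c$ with $c\,\overline{I^{\ell+w}}\subseteq I^{w+1}$ in this setting (having such a $c$ with $c$ a test-like multiplier is essentially what must be proved, not an input). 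So the paragraph meant to handle the non-equimultiple case is a wish list rather than an argument; the actual proof in \cite{AH1} is substantially more delicate and does not go by showing that powers of the minimal reduction are tightly closed.

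Two smaller points in the reductions also need care. Passing to the completion and to an infinite residue field preserves F-rationality only under additional hypotheses (e.g.\ excellence); for an arbitrary Noetherian F-rational local ring the completion need not be F-rational, so ``by standard arguments we may assume $R$ is complete'' is not automatic and must either be justified or avoided. Also, after replacing $I$ by $J$ you apply Theorem~\ref{HH} to $J$, which requires $\hgt J>0$; this is fine once the height-zero case is dispatched, but it is worth saying explicitly since $\ell$ in the statement counts generators of $I$, not of $J$.
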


If in Hochster and Huneke's Theorem \ref{HH} above, one replaces $I$ by a minimal reduction $J$, generated by $\ell$ elements (assuming that the ring $R$ is local with infinite residue field), one obtains that $\overline{I^{\ell+w}} \subseteq (J^{w+1})^*$.   The relatively simple argument that is used leads one to examine the coefficients of the elements of $J$.  For simplicity, consider the case $w=0$. Given $z \in \overline{I^\ell}= \overline{J^\ell}$, there exists an element $c \in R^0$ such that $c z^q \in (J^\ell)^q$. Since $J$ is generated by $\ell$ elements, then $cz^q \in J^{[q]}J^{(\ell-1)q}$. Further information can be obtained from taking into consideration the factor $J^{(\ell-1)q}$ and has led to results of the form $\overline{I^{\ell+w}} \subseteq J^{w+1}K$ where $I$ is an ideal of analytic spread $\ell$ in a regular local ring $R$, $J$ is a minimal reduction of $I$, and $K$ is an ideal of coefficients.

 Towards the above goal, Aberbach and Huneke introduced the following definition in \cite{AH3}:
\begin{definition} Let $R$ be a commutative Noetherian ring and let $J \subseteq I$ be two ideals of $R$. The \textit{coefficient ideal} of $I$ relative to $J$, denoted by $\mathfrak{a} (I,J)$, is the largest ideal $\mathfrak{b}$ of $R$ for which $I\mathfrak{b}=J\mathfrak{b}$.
\end{definition}

They were then able to prove Theorem \ref{coeffth1}. In the next section, we state and prove a generalization of this theorem to ideals which are not necessarily $\m$-primary.  See Theorem~\ref{mainthm} for a specific statement.\\

\section{A Brian\c{c}on-Skoda theorem with coefficients}

We are now ready to present the argument needed to generalize Theorem~\ref{coeffth1}.\\

\noindent
{\bf Notation}. If $J\subseteq I$ are two ideals of $R$, $x_1,\dots,x_n$ are elements of $R$ and $t$ is any positive integer, then $\mathfrak{a}_t$ will denote the coefficient ideal of the ideal $I+(x_1^t,\dots,x_n^t)$ relative to the ideal $J+(x_1^t,\dots,x_n^t)$. 

\begin{lemma} $(\mathfrak{a}_t)_t$ is a decreasing sequence of ideals.
\end{lemma}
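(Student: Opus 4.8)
The plan is to prove directly that $\mathfrak{a}_{t+1}\subseteq\mathfrak{a}_t$ for every $t\ge 1$. Write $I_t:=I+(x_1^t,\dots,x_n^t)$ and $J_t:=J+(x_1^t,\dots,x_n^t)$, so that by definition $\mathfrak{a}_t=\mathfrak{a}(I_t,J_t)$ is the \emph{largest} ideal $\mathfrak b$ of $R$ with $I_t\mathfrak b=J_t\mathfrak b$. Hence, to conclude $\mathfrak{a}_{t+1}\subseteq\mathfrak{a}_t$, it suffices to verify that the particular ideal $\mathfrak b:=\mathfrak{a}_{t+1}$ already satisfies $I_t\mathfrak b=J_t\mathfrak b$.

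The inclusion $J_t\mathfrak b\subseteq I_t\mathfrak b$ is automatic from $J_t\subseteq I_t$, so the content is the reverse inclusion. For this I would use two elementary containments: first, since $x_i^{t+1}=x_i\cdot x_i^t\in(x_1^t,\dots,x_n^t)$ for each $i$, we have $(x_1^{t+1},\dots,x_n^{t+1})\subseteq(x_1^t,\dots,x_n^t)$; second, $I\subseteq I_{t+1}$ trivially. Combining these with the defining equality $I_{t+1}\mathfrak b=J_{t+1}\mathfrak b$ gives
$$I\mathfrak b\subseteq I_{t+1}\mathfrak b=J_{t+1}\mathfrak b=J\mathfrak b+(x_1^{t+1},\dots,x_n^{t+1})\mathfrak b\subseteq J\mathfrak b+(x_1^t,\dots,x_n^t)\mathfrak b.$$
Adding $(x_1^t,\dots,x_n^t)\mathfrak b$ to both sides and using $I_t\mathfrak b=I\mathfrak b+(x_1^t,\dots,x_n^t)\mathfrak b$ yields $I_t\mathfrak b\subseteq J\mathfrak b+(x_1^t,\dots,x_n^t)\mathfrak b=J_t\mathfrak b$, which is exactly what is needed. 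Therefore $I_t\mathfrak{a}_{t+1}=J_t\mathfrak{a}_{t+1}$, and maximality of $\mathfrak{a}_t$ among such ideals forces $\mathfrak{a}_{t+1}\subseteq\mathfrak{a}_t$.

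I do not expect a genuine obstacle here; the argument is purely formal. The only points requiring care are keeping track of the direction of the two inclusions—raising the exponent from $t$ to $t+1$ \emph{shrinks} the monomial ideal $(x_1^t,\dots,x_n^t)$ while the base ideals $I$ and $J$ remain fixed—and invoking the correct characterization of the coefficient ideal as the largest $\mathfrak b$ with $I_t\mathfrak b=J_t\mathfrak b$. Since this works for every $t\ge 1$, we obtain the descending chain $\mathfrak{a}_1\supseteq\mathfrak{a}_2\supseteq\cdots$ claimed in the lemma.
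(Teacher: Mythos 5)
Your argument is correct and follows essentially the same route as the paper: both proofs reduce the claim to showing $I_t\mathfrak{a}_{t+1}\subseteq J_t\mathfrak{a}_{t+1}$ (the reverse inclusion being automatic), verify it using the defining equality $I_{t+1}\mathfrak{a}_{t+1}=J_{t+1}\mathfrak{a}_{t+1}$ together with $(x_1^{t+1},\dots,x_n^{t+1})\subseteq(x_1^t,\dots,x_n^t)$, and then invoke the maximality of $\mathfrak{a}_t$. No issues to report.
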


\begin{proof} [{\bf Proof}] In order to prove the inclusion $\mathfrak{a}_{t+1} \subseteq \mathfrak{a}_t$, it is enough to show that the  inclusion $\mathfrak{a}_{t+1}(I+(x_1^t,\dots,x_n^t)) \subseteq \mathfrak{a}_{t+1}(J+(x_1^t,\dots,x_n^t))$ holds, since this then implies that $\mathfrak{a}_{t+1}(I+(x_1^t,\dots,x_n^t)) = \mathfrak{a}_{t+1}(J+(x_1^t,\dots,x_n^t))$.   But $\mathfrak{a}_t$ is the largest ideal for which this equality holds.

Now the inclusion $\mathfrak{a}_{t+1}(I+(x_1^t,\dots,x_n^t)) \subseteq \mathfrak{a}_{t+1}(J+(x_1^t,\dots,x_n^t))$ is easy to prove since on one hand we have $\mathfrak{a}_{t+1}I \subseteq \mathfrak{a}_{t+1}(I+(x_1^{t+1},\dots,x_n^{t+1})) = \mathfrak{a}_{t+1}(J+(x_1^{t+1},\dots,x_n^{t+1})) \subseteq \mathfrak{a}_{t+1}(J+(x_1^t,\dots,x_n^t))$, and on the other hand,  $\mathfrak{a}_{t+1}x_i^t \subseteq \mathfrak{a}_{t+1}(J+(x_i^t))\subseteq \mathfrak{a}_{t+1}(J+(x_1^t,\dots,x_n^t))$ for all $i=1,\dots,n$. 

Hence $\mathfrak{a}_{t+1} (I+(x_1^t,\dots,x_n^t)) \subseteq \mathfrak{a}_{t+1}(J+(x_1^t,\dots,x_n^t))$.
\end{proof}

\begin{lemma} \label{inc} Let $\mathfrak{a}=\mathfrak{a}(I,J)$ and $\mathfrak{b}=\displaystyle\cap_t \mathfrak{a}_t$. Then $\mathfrak{a}\subseteq \mathfrak{b}$. 
\end{lemma}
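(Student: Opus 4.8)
The plan is to show directly that $\mathfrak{a} \subseteq \mathfrak{a}_t$ for every positive integer $t$; intersecting over $t$ then gives $\mathfrak{a} \subseteq \mathfrak{b}$. Since $\mathfrak{a}_t$ is by definition the \emph{largest} ideal $\mathfrak{c}$ satisfying $(I+(x_1^t,\dots,x_n^t))\mathfrak{c} = (J+(x_1^t,\dots,x_n^t))\mathfrak{c}$, it suffices to verify that $\mathfrak{a} = \mathfrak{a}(I,J)$ itself satisfies this equality with $\mathfrak{c} = \mathfrak{a}$.

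First I would expand using distributivity: $(I+(x_1^t,\dots,x_n^t))\mathfrak{a} = I\mathfrak{a} + (x_1^t,\dots,x_n^t)\mathfrak{a}$. By the defining property of the coefficient ideal, $I\mathfrak{a} = J\mathfrak{a}$, so this equals $J\mathfrak{a} + (x_1^t,\dots,x_n^t)\mathfrak{a} = (J+(x_1^t,\dots,x_n^t))\mathfrak{a}$. Hence $\mathfrak{a}$ lies among the ideals over which $\mathfrak{a}_t$ is the maximum, giving $\mathfrak{a} \subseteq \mathfrak{a}_t$. As $t$ was arbitrary, $\mathfrak{a} \subseteq \cap_t \mathfrak{a}_t = \mathfrak{b}$, as desired.

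There is essentially no obstacle here: the argument is a one-line distributivity computation combined with the maximality built into the definition of $\mathfrak{a}_t$. The only point requiring any care is to make sure that the equality $(I+(x_1^t,\dots,x_n^t))\mathfrak{a} = (J+(x_1^t,\dots,x_n^t))\mathfrak{a}$ is stated as an equality of ideals (both inclusions follow at once from $I\mathfrak{a}=J\mathfrak{a}$), so that $\mathfrak{a}$ genuinely qualifies as a candidate in the family defining $\mathfrak{a}_t$.
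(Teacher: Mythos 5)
Your proof is correct and follows essentially the same route as the paper: both verify that $\mathfrak{a}$ itself satisfies $(I+(x_1^t,\dots,x_n^t))\mathfrak{a}=(J+(x_1^t,\dots,x_n^t))\mathfrak{a}$ (the paper states the one nontrivial inclusion, you state the equality via distributivity and $I\mathfrak{a}=J\mathfrak{a}$) and then invoke the maximality defining $\mathfrak{a}_t$ to get $\mathfrak{a}\subseteq\mathfrak{a}_t$ for every $t$.
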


\begin{proof} [{\bf Proof}] To prove $\mathfrak{a} \subseteq \mathfrak{a}_t$ for all $t$, we show that  $\mathfrak{a}(I+(x_1^t,\dots,x_n^t)) \subseteq \mathfrak{a}(J+(x_1^t,\dots,x_n^t))$.\\
But we have that $\mathfrak{a} I = \mathfrak{a} J \subseteq \mathfrak{a} (J+(x_1^t,\dots,x_n^t))$, and also that for any $i=1,\dots,n$, $\mathfrak{a} x_i^t \subseteq \mathfrak{a} (J+(x_i^t)) \subseteq \mathfrak{a}(J+(x_1^t,\dots,x_n^t))$. 

Hence the inclusion $\mathfrak{a}(I+(x_1^t,\dots,x_n^t)) \subseteq \mathfrak{a}(J+(x_1^t,\dots,x_n^t))$ is clear.
\end{proof}

We will need Chevalley's theorem in order to prove Theorem~\ref{mainthm}.

\begin{theorem} \label{chev} \textup{(\cite{Ch}, Lemma 7)} Let $(R,\m)$ be a complete local ring and let $\{J_n\}_n$ be a decreasing sequence of ideals with $\cap_n J_n = 0$. Then, for all $n \geq 1$, there exists $t_n \geq 1$, such that $J_{t_n} \subseteq m^n.$
\end{theorem}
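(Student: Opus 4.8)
The plan is to prove this (Chevalley's lemma) by contradiction, manufacturing a nonzero element of $\bigcap_t J_t$ if the conclusion fails. So suppose there is an $n\ge 1$ with $J_t\not\subseteq \m^n$ for every $t\ge 1$. For each integer $m\ge n$, the images $(J_t+\m^m)/\m^m$ form a descending chain of submodules of $R/\m^m$, which has finite length since $R$ is Noetherian and local; hence the chain stabilizes, and I write $A_m:=J_t+\m^m$ for $t\gg 0$ for its stable value. Two facts will be used. First, $A_m\not\subseteq \m^n$ (hence $A_m\supsetneq \m^m$): indeed $A_m\supseteq J_t$ for $t\gg 0$ and no $J_t$ lies in $\m^n\supseteq\m^m$. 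Second, $A_{m+1}\subseteq A_m$ and $A_{m+1}+\m^m=A_m$; both follow by choosing a single $t$ large enough to stabilize levels $m$ and $m+1$ simultaneously, since then $A_{m+1}+\m^m=(J_t+\m^{m+1})+\m^m=J_t+\m^m=A_m$. In particular the surjections $R/\m^{m+1}\twoheadrightarrow R/\m^m$ restrict to surjections $A_{m+1}/\m^{m+1}\twoheadrightarrow A_m/\m^m$.

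Next I would assemble a limit element. Choose $x_n\in A_n\setminus\m^n$, and inductively, given $x_m\in A_m$, use $A_{m+1}+\m^m=A_m$ to pick $x_{m+1}\in A_{m+1}$ with $x_{m+1}-x_m\in\m^m$. The sequence $(x_m)_{m\ge n}$ is $\m$-adically Cauchy, so by completeness it converges to some $x\in R$ with $x-x_m\in\m^m$ for all $m\ge n$. Then $x\in x_m+\m^m\subseteq A_m$ for every $m\ge n$, so $x$ lies in $J_\infty:=\bigcap_{m\ge n}A_m$; and since $x-x_n\in\m^n$ while $x_n\notin\m^n$, we get $x\notin\m^n$, so in particular $x\ne 0$.

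To finish I would show $J_\infty\subseteq\bigcap_t J_t$, contradicting $x\ne 0$. Fix $t$; for each $m\ge n$ there is $t'\ge t$ with $A_m=J_{t'}+\m^m\subseteq J_t+\m^m$ (using $J_{t'}\subseteq J_t$), hence $J_\infty\subseteq\bigcap_{m\ge n}(J_t+\m^m)=\bigcap_{m\ge 1}(J_t+\m^m)=J_t$ by Krull's intersection theorem applied to the Noetherian local ring $R/J_t$. As $t$ is arbitrary, $J_\infty\subseteq\bigcap_t J_t=0$, the desired contradiction; therefore no bad $n$ exists.

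The step I expect to be the main obstacle is reconciling the two distinct finiteness inputs and keeping the quantifiers straight: the \emph{horizontal} stabilization of $\{J_t+\m^m\}_t$ comes from the Artinian property of $R/\m^m$, while gluing the stable ideals $A_m$ into a single element is \emph{vertical} and needs completeness. One must arrange a single $t$ valid for two consecutive levels so the transition maps are genuinely surjective, and must verify that the limit simultaneously lies in every $A_m$ yet is already detected as nonzero modulo $\m^n$; once this bookkeeping is in place, Krull's intersection theorem closes the argument routinely.
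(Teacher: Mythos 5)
Your proof is correct. Note that the paper itself offers no proof of this statement: it is quoted as Chevalley's theorem with a citation to \cite{Ch}, Lemma 7, so there is no internal argument to compare against. Your argument is essentially the standard proof of Chevalley's lemma: for each $m$ the descending chain $\{(J_t+\m^m)/\m^m\}_t$ stabilizes because $R/\m^m$ is Artinian, the stable ideals $A_m$ are compatible under the maps $R/\m^{m+1}\to R/\m^m$, completeness glues a coherent sequence of representatives into a limit element $x$ that is nonzero (indeed nonzero modulo $\m^n$) yet lies in every $A_m$, and Krull's intersection theorem in $R/J_t$ gives $\bigcap_m(J_t+\m^m)=J_t$, forcing $x\in\bigcap_t J_t=0$, a contradiction. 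The quantifier bookkeeping you flag (one $t$ serving two consecutive levels, and closedness of $\m^m$ so that the limit stays in $x_m+\m^m$) is handled correctly, so the argument stands as a complete substitute for the citation.
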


We now present the main theorem in this paper.

\begin{theorem}\label{mainthm} Let $(R,\m)$ be a regular local ring of dimension $d$ containing a field. Let $I$ be an ideal of $R$ of analytic spread $\ell$, and let $J$ be a reduction of $I$. Choose $x_1,\dots,x_n$ in $R$ such that the ideal $I+(x_1,\dots,x_n)$ is $m$-primary. Let $\mathfrak{b}=\displaystyle\cap_t \mathfrak{a}_t$ (with $\mathfrak{a}_t$ being as in the notation above), and assume that $R/\mathfrak{b}$ is complete (in particular $R$ itself may be complete). Then $\mathfrak{b}= \mathfrak{a}(I,J)$ and for all $w \geq -1$ we have 
$$ \overline{I^{\ell+w}} \subseteq J^{w+1} {\mathfrak{a}} (I,J).$$
\end{theorem}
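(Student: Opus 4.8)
The plan is to prove the two assertions separately, in each case reducing to the $\m$-primary Theorem~\ref{coeffth1} and then passing to a limit using Chevalley's Theorem~\ref{chev} and Krull's intersection theorem. Since $\mathfrak a(I,J)=R$ would force $I=J$, I may assume $I\neq J$; I will always take the $x_i$ in $\m$ (if $I$ is already $\m$-primary the statement is just Theorem~\ref{coeffth1}, with $n=0$), and I assume the residue field infinite, as is needed to invoke Theorem~\ref{coeffth1}.

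\emph{Step 1: $\mathfrak b=\mathfrak a(I,J)$.} By Lemma~\ref{inc} it remains to show $\mathfrak b\subseteq\mathfrak a:=\mathfrak a(I,J)$, i.e. $I\mathfrak b\subseteq J\mathfrak b$ (maximality of $\mathfrak a$ then finishes it). Fix $z\in I\mathfrak b$. For each $t$ we have $\mathfrak b\subseteq\mathfrak a_t$, so
$$z\in I\mathfrak a_t\subseteq \bigl(I+(x_1^t,\dots,x_n^t)\bigr)\mathfrak a_t=\bigl(J+(x_1^t,\dots,x_n^t)\bigr)\mathfrak a_t\subseteq J\mathfrak a_t+\m^t .$$
Given $N$, the decreasing sequence $(\mathfrak a_t/\mathfrak b)_t$ in the complete local ring $R/\mathfrak b$ has zero intersection, so Theorem~\ref{chev} gives $t\geq N$ with $\mathfrak a_t\subseteq\mathfrak b+\m^N$; then $z\in J\mathfrak a_t+\m^t\subseteq J\mathfrak b+\m^N$. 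As $N$ is arbitrary, $z\in\bigcap_N(J\mathfrak b+\m^N)=J\mathfrak b$, so $I\mathfrak b\subseteq J\mathfrak b$ and $\mathfrak b=\mathfrak a(I,J)$.

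\emph{Step 2: the Brian\c con--Skoda inclusion.} Fix $w\geq -1$ and $z\in\overline{I^{\ell+w}}$; by Krull's theorem it suffices to place $z$ in $J^{w+1}\mathfrak b+\m^N$ for every $N$. For each $t$, $\mathfrak q_t:=I+(x_1^t,\dots,x_n^t)$ is $\m$-primary with analytic spread $d=\dim R$, the ideal $J+(x_1^t,\dots,x_n^t)$ is a reduction of $\mathfrak q_t$, and I pick a minimal reduction $\mathfrak r_t\subseteq J+(x_1^t,\dots,x_n^t)$ of $\mathfrak q_t$; then $\mathfrak a(\mathfrak q_t,\mathfrak r_t)\subseteq\mathfrak a_t$, and Theorem~\ref{coeffth1} applied to $\mathfrak q_t$ gives, for all $w$,
$$\overline{\mathfrak q_t^{\,d+w}}\subseteq\mathfrak r_t^{\,w+1}\,\mathfrak a(\mathfrak q_t,\mathfrak r_t)\subseteq\bigl(J+(x_1^t,\dots,x_n^t)\bigr)^{w+1}\mathfrak a_t .$$
Choosing the $x_i$ generically I also arrange that $x_1,\dots,x_n$ is a regular sequence. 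Since $I\subseteq\mathfrak q_t$ and $(x_1^t,\dots,x_n^t)\subseteq\mathfrak q_t$, multiplying $\overline{I^{\ell+w}}$ by a suitable product of the $x_i^t$ lands it inside $\overline{\mathfrak q_t^{\,d+w}}$, hence inside $(J+(x_1^t,\dots,x_n^t))^{w+1}\mathfrak a_t$; I then want to cancel that product, using the regular sequence together with a colon-ideal computation and the expansion of $(J+(x_1^t,\dots,x_n^t))^{w+1}\mathfrak a_t$ into a $J^{w+1}\mathfrak a_t$-part plus a part in $\m^{g(t)}$ with $g(t)\to\infty$ (this is where $x_i\in\m$ enters). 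This should yield $\overline{I^{\ell+w}}\subseteq J^{w+1}\mathfrak a_t+\m^{g(t)}$ for all $t$. Finally, for a given $N$, Theorem~\ref{chev} in $R/\mathfrak b$ furnishes $t$ with $\mathfrak a_t\subseteq\mathfrak b+\m^N$ and $g(t)\geq N$, so $z\in J^{w+1}(\mathfrak b+\m^N)+\m^N\subseteq J^{w+1}\mathfrak b+\m^N$; letting $N\to\infty$ gives $z\in J^{w+1}\mathfrak b=J^{w+1}\mathfrak a(I,J)$.

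The hard part is the cancellation in Step 2. Theorem~\ref{coeffth1} only controls $\overline{\mathfrak q_t^{\,d+w}}$, whereas the target features $\overline{I^{\ell+w}}$ with the analytic spread $\ell<d$ in place of $d$; so I must multiply up by precisely the right power of the auxiliary elements and then strip it back off \emph{without} degrading the power $w+1$ of $J$, which forces a careful choice of the $x_i$ and a delicate colon-ideal analysis — in particular one wants the relevant elements to be nonzerodivisors modulo the ideals $J^{w+1}\mathfrak a_t$ that occur, and one expects to use the identity $I^m\mathfrak a=J^m\mathfrak a$ for all $m$ (which follows from $I\mathfrak a=J\mathfrak a$). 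Making this bookkeeping uniform in $t$, so that the $\m^{g(t)}$ errors genuinely vanish in the limit, is the crux; it is conceivable that instead of a black-box appeal to Theorem~\ref{coeffth1} one must mimic the iterative argument behind it, with Theorem~\ref{chev} replacing the stabilization that is available in the $\m$-primary case.
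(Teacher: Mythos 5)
Your Step 1 is correct and is essentially the paper's own argument: the same chain $I\mathfrak a_t\subseteq(I+(x_1^t,\dots,x_n^t))\mathfrak a_t=(J+(x_1^t,\dots,x_n^t))\mathfrak a_t\subseteq J\mathfrak a_t+\m^t$, followed by Chevalley's theorem applied in $R/\mathfrak b$ and the Krull intersection theorem (the paper runs it as a contradiction with a single product $yc$, you argue directly; no difference of substance). The same is true of the limiting portion of your Step 2: once one has $\overline{I^{\ell+w}}\subseteq(J+(x_1^t,\dots,x_n^t))^{w+1}\mathfrak a_t$ for all $t$, your passage to $J^{w+1}\mathfrak b+\m^N$ and then to $J^{w+1}\mathfrak b$ is exactly the paper's ending.

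The gap is the one you flag yourself: you never establish the key uniform inclusion $\overline{I^{\ell+w}}\subseteq(J+(x_1^t,\dots,x_n^t))^{w+1}\mathfrak a_t$ (equivalently $\overline{I^{\ell+w}}\subseteq J^{w+1}\mathfrak a_t+\m^{g(t)}$ with $g(t)\to\infty$). Your multiply-and-cancel scheme is only described, not executed: multiplying $z\in\overline{I^{\ell+w}}$ by $d-\ell$ of the $x_i^t$ does land it in $\overline{\mathfrak q_t^{\,d+w}}\subseteq\mathfrak r_t^{\,w+1}\mathfrak a(\mathfrak q_t,\mathfrak r_t)$ (with your $\mathfrak q_t=I+(x_1^t,\dots,x_n^t)$), but to strip that factor off you must control a colon ideal of the form $\bigl(J^{w+1}\mathfrak a_t+\cdots\bigr):x^{t(d-\ell)}$, and nothing in your sketch bounds it by $J^{w+1}\mathfrak a_t$ plus an error that is small uniformly in $t$; genericity of the $x_i$ does not make them nonzerodivisors modulo the ideals $J^{w+1}\mathfrak a_t$, and you yourself concede the step may force redoing the iteration of \cite{AH3}. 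For comparison, the paper performs no cancellation at all: it applies Theorem~\ref{coeffth1} directly to $\mathfrak q_t$ with the same $w$, asserting $\overline{I^{\ell+w}}\subseteq\overline{\mathfrak q_t^{\,\ell+w}}\subseteq(J+(x_1^t,\dots,x_n^t))^{w+1}\mathfrak a_t$, and then passes to the limit exactly as you do. Your discomfort with the exponent is not unfounded: as quoted, Theorem~\ref{coeffth1} controls $\overline{\mathfrak q_t^{\,d+w}}$, and the displayed intermediate inclusion $\overline{\mathfrak q_t^{\,\ell+w}}\subseteq(J+(x_1^t,\dots,x_n^t))^{w+1}\mathfrak a_t$ is strictly stronger (for $I=J=(u^2)$, $x_1=v$, $t=2$ in $k[[u,v]]$ it even fails: $uv\in\overline{(u^2,v^2)}$ and $\mathfrak a_2=R$, yet $uv\notin(u^2,v^2)$), so what is really needed is the composite inclusion with $\overline{I^{\ell+w}}$ on the left — but your proposal does not prove that either. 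As submitted, Step 2 is an acknowledged sketch, so the proposal does not yet prove the theorem.
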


\begin{proof} [{\bf Proof}] Since $J$ is a reduction of $I$, there exists $r$ such that $JI^r=I^{r+1}$ and this implies that for any ideal $L$ of $R$, $(J+L)(I+L)^r= (I+L)^{r+1}$. In fact we have: 
\begin{equation*}
\begin{split}
(I+L)^{r+1}= I^{r+1}+I^r L+\dots+L^r I + L^{r+1} & = JI^r+ L(I^r+\dots+L^{r-1}I+L^r)\\
&\subseteq (J+L)(I+L)^r \subseteq (I+L)^{r+1}.
\end{split}
\end{equation*}
In particular, for all $t$, we have $(J+(x_1^t,\dots,x_n^t))(I+(x_1^t,\dots,x_n^t))^r= (I+(x_1^t,\dots,x_n^t))^{r+1}$. Hence for all $t$, $J+(x_1^t,\dots,x_n^t)$ is a reduction of $I+(x_1^t,\dots,x_n^t)$. Now apply Theorem \ref{coeffth1} to the $m$-primary ideal $I+(x_1^t,\dots,x_n^t)$ to conclude that $$\overline{I^{\ell+w}} \subseteq \overline{(I+(x_1^t,\dots,x_n^t))^{\ell+w}} \subseteq (J+(x_1^t,\dots,x_n^t))^{w+1} \mathfrak {a}_t.$$

Next, we show that $\mathfrak{a}= \mathfrak{b}$ where $\mathfrak{a}=\mathfrak{a}(I,J)$ is the coefficient ideal of $I$ relative to $J$. We already know from Lemma \ref{inc} that $\mathfrak{a} \subseteq \mathfrak{b}$. 
If $\mathfrak{b}$ is strictly larger than  $\mathfrak{a}$, then $\mathfrak{b} J \neq \mathfrak{b} I$. Thus there are elements $y \in \mathfrak{b}$ and $c \in I$ with $y c \notin \mathfrak{b} J$.

We are going to prove that $y c\in \mathfrak{b}J$, and therefore by contradiction we conclude that $\mathfrak{b} = \mathfrak{a}$.

For any $t$, $y$ is an element of $\mathfrak{a}_t$ and this implies that $y c \in \mathfrak{a}_t I \subseteq \mathfrak{a}_t (I+(x_1^t,\dots,x_n^t)) = \mathfrak{a}_t (J+(x_1^t,\dots,x_n^t)) \subseteq \mathfrak{a}_t J + (x_1^t,\dots,x_n^t)$.
Hence, $yc \in \bigcap_{t} (\mathfrak{a}_t J + (x_1^t,\dots,x_n^t)) \subseteq \bigcap_t (\mathfrak{a}_t J+ m^t)$.

Since $R/{\mathfrak{b}}$ is complete and $(\mathfrak{a}_t)$ is a decreasing sequence with $\displaystyle\cap_t \mathfrak{a}_t= \mathfrak{b}$, Chevalley's theorem shows that for all $j \in \mathbb{N}$, there exists $t_j $ such that $\mathfrak{a}_{t_j} \subseteq \mathfrak{b}+ m^j$ and the sequence $(t_j)$ can be chosen increasing.
Consequently, we deduce that for any $t \geq t_1$, there exists $j_t \in \mathbb{N}$ with $\mathfrak{a}_t \subseteq \mathfrak{b} + m^{j_t}$, and such that the sequence $(j_t)$ is increasing to infinity. This can be done by taking $j_t = k$ for all $t_k \leq t < t_{k+1}$, $k\geq 1$. 

Hence if $t \geq t_1$, we obtain that $ \mathfrak{a}_t J + m^t \subseteq (\mathfrak{b} + m^{j_t})J + m^t  \subseteq \mathfrak{b} J + m^{\lambda}$ where $\lambda = \mbox{minimum} \{j_t, t\}$.  Note that $\lambda$ is going to infinity as t goes to infinity.
Therefore, $\bigcap_{t} (\mathfrak{a}_t J  + m^t) \subseteq \bigcap_{\lambda \to \infty} (\mathfrak{b} J + m^{\lambda}) \subseteq \mathfrak{b} J$, by the Krull intersection theorem. 

Thus we have proved that $y c \in \mathfrak{b} J$,  a contradiction. 
The desired conclusion $\mathfrak{b} = \mathfrak{a}$ now follows. 

To finish the proof of the theorem, recall that we have already proved that for all $t$, $ \overline{I^{\ell+w}} \subseteq (J+(x_1^t,\dots,x_n^t))^{w+1} \mathfrak {a}_t.$

But for $t\gg 0$, there exists $j_t$ such that $\mathfrak{a}_t \subseteq \mathfrak{b}+ m^{j_t}= \mathfrak{a}+ m^{j_t}$ and $(j_t)$ is increasing to infinity. Hence,
\begin{equation*}
\begin{split}
\overline{I^{\ell+w}} &\subseteq (J+(x_1^t,\dots,x_n^t))^{w+1} \mathfrak {a}_t\\
& \subseteq J^{w+1} \mathfrak{a}_t + (x_1^t,\dots,x_n^t)\\
& \subseteq J^{w+1} (\mathfrak{a} + m^{j_t})+m^t\\
& \subseteq J^{w+1} \mathfrak{a} + m^{\mbox{min}\{j_t,t\}}
\end{split}
\end{equation*}
where min $\{j_t, t\} \rightarrow \infty$ as $t\rightarrow \infty$.
By another application of the Krull intersection theorem we finally conclude that $\overline{I^{\ell+w}} \subseteq J^{w+1} \mathfrak{a}$, proving the theorem. 
\end{proof}

\begin{question} Can we prove Theorem \ref{mainthm} without assuming that $R/\mathfrak{b}$ is complete? We will have an affirmative answer if the coefficient ideal commutes with completion, i.e. if $\mathfrak{a}(I,J) \hat{R}= \mathfrak{a}(I\hat{R},J\hat{R})$. Because if this is true, then as $\hat{R}$ is faithfully flat, one deduces that 
\begin{equation*}
\begin{split} 
\overline{I^{\ell+w}} = \overline{I^{\ell+w}}\hat{R} \cap R & \subseteq \overline{I^{\ell+w} \hat{R}} \cap R\\
& \subseteq J^{w+1} \mathfrak{a}(I\hat{R},J\hat{R})\hat{R} \cap R\\
& = J^{w+1} \mathfrak{a}(I,J) \hat{R} \cap R \\
& = J^{w+1} \mathfrak{a}(I,J).
\end{split}
\end{equation*}
Note that we always have $\mathfrak{a}(I,J) \hat{R} \subseteq \mathfrak{a}(I\hat{R},J\hat{R})$. We would like to know whether the second inclusion holds in general. 
\end{question}

\begin{remark} We observe that the coefficient ideal does not commute with localization. Consider $J \subseteq I$ with $J_P = I_P$ for some prime $P$, but not equal up to integral closure. Replace $J$ by $m^n J$. Then for $n \gg 0$, $\overline {m^n J} \subset \overline {I}$ but are not equal. Thus $\mathfrak{a}(m^n J,I) = 0$ but $\mathfrak{a}((m^n J)_P,I_P)= \mathfrak{a}(I_P,I_P)= R_P$. \\
\end{remark} 

\bigskip


\begin{thebibliography}{99}
\bibitem{AH1} I. M. Aberbach and C. Huneke, \emph{F-rational rings and the integral closures of ideals,} Michigan Math. J., \textbf{49} (2001), 3-11.
\bibitem{AH2} I. M. Aberbach and C. Huneke, \emph{F-rational rings and the integral closure of ideals II,} Ideal Theoretic Methods in Commutative Algebra, Lecture notes in pure and applied mathematics, \textbf{220} (2001), Marcel Dekker, 1-12. 
\bibitem{AH3} I. M. Aberbach and C. Huneke, \emph{A theorem of Brian\c{c}on-Skoda type for regular local rings containing a field,} Proc. Amer. Math. Soc., \textbf{124} (1996), 707-713.
\bibitem{AHT} I. M. Aberbach, C. Huneke and N. V. Trung, \emph{Reduction numbers, Brian\c{c}on-Skoda theorems and the depth of Rees rings,} Compositio Math., \textbf{97} (1995), 403-434.
\bibitem{BS} J. Brian\c{c}on and H. Skoda, \emph{Sur la cl$\hat{o}$ture int$\acute{e}$grale d'un id$\acute{e}$al de germes de fonctions holomorphes en un point de $\mathbb{C}^n$,} C. R. Acad. Sci. Paris S$\acute{e}$r. A, \textbf{278} (1974), 949-951.
\bibitem{Ch} C. Chevalley, \emph{On the theory of local rings,} Ann. of Math., \textbf{44} (1943), 690-708.
\bibitem{HH} M. Hochster and C. Huneke, \emph{Tight closure, invariant theory, and the Brian\c{c}on-Skoda theorem,} J. Amer. Math. Soc., \textbf{3} (1990), 31-116.
\bibitem{LS} J. Lipman and A. Sathaye, \emph{Jacobian ideals and a theorem of Brian\c{c}on-Skoda,} Michigan Math. J., \textbf{28} (1981), 199-222.
\bibitem{LT} J. Lipman and B. Teissier, \emph{Pseudo-rational local rings and a theorem of Brian\c{c}on-Skoda about integral closures of ideals,} Michigan Math. J., \textbf{28} (1981), 97-116.
\bibitem{NR} S. Northcott and D. Rees, \emph{Reductions of ideals in local ring,} Math. Proc. Cambridge Phil. Soc., \textbf{50} (1954), 145-158.
\bibitem{Sw} I. Swanson, \emph{Joint reductions, tight closure, and the Brian\c{c}on-Skoda theorem,} J. of Algebra, \textbf{147} (1992), 128-136.

\end{thebibliography}
\end{document}